\begin{document}
\title{On Mullin's second sequence of primes}
\author{Andrew R.~Booker}
\thanks{The author was supported by an EPSRC fellowship}
\address{
Howard House\\
University of Bristol\\
Queens Ave\\
Bristol\\
BS8 1SN\\
United Kingdom
}
\email{\tt andrew.booker@bristol.ac.uk}
\date{}
\begin{abstract}
We consider the second of Mullin's sequences of prime numbers related to
Euclid's proof that there are infinitely many primes. We show in
particular that it omits infinitely many primes, confirming a conjecture
of Cox and van der Poorten.
\end{abstract}
\maketitle
\newtheorem{theorem}{Theorem}
\newtheorem{proposition}{Proposition}
\newtheorem{lemma}{Lemma}
\newcommand{\F}{\mathbb{F}}
\newcommand{\Z}{\mathbb{Z}}
\newcommand{\Q}{\mathbb{Q}}
\newcommand{\R}{\mathbb{R}}
\section{Introduction}
In \cite{mullin}, Mullin constructed two sequences of prime numbers
related to Euclid's proof that there are infinitely many primes.
For the first sequence, say $\{p_n\}_{n=1}^{\infty}$,
we take $p_1=2$ and define $p_{n+1}$ to be the
smallest prime factor of $1+p_1\cdots p_n$.  The second sequence,
$\{P_n\}_{n=1}^{\infty}$, is defined similarly, except that we replace
the words ``smallest prime factor'' by ``largest prime factor''.
These are sequences A000945 and A000946 in the OEIS
\cite{OEIS}, and the first few terms of each are shown below.
\begin{table}[h]
\caption{First ten terms of Mullin's sequences}
\begin{tabular}{rll}
$n$&$p_n$&$P_n$\\ \hline
$1$ & $2$ & $2$ \\
$2$ & $3$ & $3$ \\
$3$ & $7$ & $7$ \\
$4$ & $43$ & $43$ \\
$5$ & $13$ & $139$ \\
$6$ & $53$ & $50207$ \\
$7$ & $5$ & $340999$ \\
$8$ & $6221671$ & $2365347734339$ \\
$9$ & $38709183810571$ & $4680225641471129$ \\
$10$& $139$ & $1368845206580129$
\end{tabular}
\end{table}

Mullin then asked whether every prime is contained in each of these
sequences, and if not, whether they are recursive, i.e.\
whether there is an algorithm to decide if a given prime occurs
or not.\footnote{Mullin also asked whether the second sequence might be
monotonic (and hence recursive); this was answered negatively by Naur
\cite{naur}, who was the first to compute it beyond the 9th term.
However, it remains an open question whether there are infinitely
many $n$ such that $P_n>P_{n+1}$.}
Almost nothing related to this is known for the first sequence, though
Shanks \cite{shanks} conjectured on probabilistic grounds that
it contains every prime; we briefly discuss this conjecture and some
variants in Section 2 below.  Concerning the second sequence, Cox and
van der Poorten \cite{cv} showed that, apart from the first four terms
$2$, $3$, $7$ and $43$, it omits all the primes less than $53$; it is
straightforward to extend this to the remaining primes less than $79$
by applying their method using the most recent computations of $P_n$,
due of Wagstaff \cite{wagstaff}.  In response to Mullin's questions,
Cox and van der Poorten conjectured that
infinitely many primes are omitted, and that their
method would always work to decide whether a given prime occurs;
moreover, they showed that at least one of their conjectures is true.
The main point of this paper is to prove the first of these conjectures.
Precisely, we show the following.
\begin{theorem}
The sequence $\{P_n\}_{n=1}^{\infty}$ omits infinitely many primes.
If $\{Q_n\}_{n=1}^{\infty}$ denotes the sequence of omitted primes in
increasing order, then
$$
\limsup_{n\to\infty}\frac{\log Q_{n+1}}{\log(Q_1\cdots Q_n)}
\le\frac1{4\sqrt{e}-1}=0.1787\ldots.
$$
\end{theorem}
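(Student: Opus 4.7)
The plan is to exploit the structural identity
\[ 1 + \Pi_n = P_{n+1}\, m_n, \]
where $\Pi_n := P_1 P_2 \cdots P_n$ and $m_n$ is a positive integer that is $P_{n+1}$-smooth and coprime to $\Pi_n$. This coprimality forces each prime factor of $m_n$ to fall into one of three cases: $P_{n+1}$ itself, an omitted prime $Q_j \le P_{n+1}$, or a ``descent'' $P_k$ with $k > n+1$ and $P_k < P_{n+1}$. The omitted primes therefore act as an essential reservoir of small prime factors for the $m_n$, and the theorem is the quantitative statement that this reservoir cannot be too meager.

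I would argue by contradiction. Assume either that only finitely many primes are omitted or, more generally, that the limsup in the theorem strictly exceeds $c := 1/(4\sqrt{e}-1)$. After passing to a tail we may suppose $\log Q_{n+1} > c\log(Q_1 \cdots Q_n)$ for all sufficiently large $n$, which forces the $Q_n$ to grow at least doubly exponentially and hence to be very sparse. The core of the argument is to compare two estimates for $\sum_{n \le N}\log m_n$. The identity gives the exact expression $\sum_{n \le N}[\log(1 + \Pi_n) - \log P_{n+1}]$, yielding a lower bound in terms of the $P_i$. For the upper bound, note that for any prime $q$ coprime to $\Pi_n$, the condition $q \mid m_n$ is equivalent to $\Pi_n \equiv -1 \pmod q$; a standard equidistribution argument in $(\Z/q\Z)^{\times}$ bounds the total contribution of $q$ to $\sum_{n \le N}\log m_n$ by roughly $N\log q/(q-1)$, and summing this over the sparse set of available small primes should yield a total much smaller than the lower bound.

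The principal technical obstacle is the contribution from descents, since the finiteness of descents in $\{P_n\}$ is open (as the footnote indicates). The remedy is to treat descent primes on equal footing with omitted primes, absorbing both into a uniform Rankin-style estimate: introducing a smoothness cutoff $y$, one bounds the contribution of the ``rough'' part of each $m_n$ using the classical estimate $\Psi(x,y) \ll x\, u^{-u}$ with $u = \log x/\log y$, while the ``smooth'' part is controlled by the equidistribution argument above. The explicit constant $1/(4\sqrt{e}-1)$ should emerge as the optimal tradeoff in balancing these two estimates against the growth of $\log P_{n+1}$, with the factor $\sqrt{e}$ arising from the standard Rankin-type calculus. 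Combining the bounds then yields the required contradiction for large $N$.
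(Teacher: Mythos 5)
Your proposal does not follow the paper's route, and unfortunately it has gaps that I do not see how to repair. The most serious one is the ``standard equidistribution argument in $(\Z/q\Z)^{\times}$'' that you invoke to bound how often $q\mid m_n$, i.e.\ how often $\Pi_n\equiv-1\;(\text{mod }q)$. No such result is known: the equidistribution of the partial products $\Pi_n$ modulo a fixed prime is exactly the unproven heuristic behind Shanks' conjecture (discussed in Section 2 of the paper), and nothing at all is currently provable about the frequency of the event $\Pi_n\equiv-1\;(\text{mod }q)$. A second gap is on the other side of your comparison: the lower bound $\sum_{n\le N}\log m_n=\sum_{n\le N}[\log(1+\Pi_n)-\log P_{n+1}]$ can be trivial, since nothing rules out $1+\Pi_n$ being prime (so $m_n=1$) for many or even all $n$; your scheme would then have nothing to contradict, yet the theorem must still be proved in that case. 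Finally, $\Psi(x,y)\ll x\,u^{-u}$ is a count of smooth numbers up to $x$, and it gives no pointwise control on an individual smooth number $m_n$, so the Rankin-style step does not bound the ``rough part'' of anything.

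The missing idea is character-theoretic, and it is the engine of the paper's proof. One observes that $1+P_1\cdots P_n\equiv3\;(\text{mod }4)$, so the Kronecker symbol $\bigl(\frac{-4}{\cdot}\bigr)$ evaluated multiplicatively on the factorization $1+P_1\cdots P_n=q_1^{k_1}\cdots q_r^{k_r}$ yields $-1$, whereas for any fundamental discriminant $d\mid P_1\cdots P_n$ one has $1+P_1\cdots P_n\equiv1\;(\text{mod }d)$ and hence $\bigl(\frac{d}{\cdot}\bigr)$ yields $+1$. If only $Q_1,\ldots,Q_r$ were omitted below $x$ and $p\le x$ were the last prime to occur, the factorization would be $Q_1^{k_1}\cdots Q_r^{k_r}p^k$; the paper then uses the Burgess bound on the least quadratic non-residue (Lemma 1) together with an $\F_2$-linear-algebra argument (Lemma 2) to manufacture a squarefree $d\equiv1\;(\text{mod }4)$, supported on primes of size $O_\varepsilon\bigl((pQ_1\cdots Q_r)^{1/(4\sqrt{e})+\varepsilon}\bigr)$ --- hence on primes that have already occurred --- with $\bigl(\frac{d}{p}\bigr)=\bigl(\frac{-4}{p}\bigr)$ and $\bigl(\frac{d}{Q_i}\bigr)=\bigl(\frac{-4}{Q_i}\bigr)$ for all $i$. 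This forces $+1=-1$, a contradiction. The constant $\frac1{4\sqrt{e}}$ is the Burgess--Vinogradov exponent for the least quadratic non-residue (the $\sqrt{e}$ does trace back to a smooth-number computation, but only inside that character-sum argument, which your proposal never touches), and $\frac1{4\sqrt{e}-1}$ arises from solving $x=(xQ_1\cdots Q_r)^{1/(4\sqrt{e})}$ for the threshold $x$. Without a character (or some comparable multiplicative invariant distinguishing residues $3$ and $1$ modulo $4$), there is no source of contradiction in your framework.
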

We note that although our method of proof allows us to bound each omitted
prime $Q_n$ in terms of the previous ones, it is not constructive;
in particular, Mullin's second question remains open (see Theorem 2
below, however).

The number $\frac1{4\sqrt{e}-1}$ in the theorem
is related to the best known bound
$O\Bigl(p^{\frac1{4\sqrt{e}}+o(1)}\Bigr)$ for the
least quadratic non-residue (mod $p$).  This was first shown by Burgess
\cite{burgess1}, based on an argument of Vinogradov; apart from refinements
of the $o(1)$, it has not been improved upon in over 50 years.
However, if the Generalized Riemann Hypothesis for quadratic Dirichlet
$L$-functions is true then one can show the much stronger bound
$Q_{n+1}=O\bigl(\log^2(Q_1\cdots Q_n)\bigr)$,
from which it follows that
$$
\#\{n:Q_n\le x\}\gg \frac{\sqrt{x}}{\log{x}}
$$
for large $x$.  Even this seems far from the truth; indeed, it
is likely that the set of primes that occur in $\{P_n\}_{n=1}^{\infty}$
has density $0$.
While we have not been able to prove that unconditionally, by refining
Cox and van der Poorten's argument on the relationship between their
conjectures, we can show the following.
\begin{theorem}
If $\{P_n\}_{n=1}^{\infty}$ is not recursive then it
has logarithmic density $0$ in the primes, i.e.\
$$
\lim_{x\to\infty}
\frac{\sum_{\substack{p\le x\\p\in\{P_1,P_2,\ldots\}}}\frac1p}
{\sum_{\substack{p\le x\\p\text{ \rm prime}}}\frac1p}
=0.
$$
\end{theorem}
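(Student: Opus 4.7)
I would prove the contrapositive: assume $S := \{P_n : n \geq 1\}$ has positive upper logarithmic density among the primes, and show that $S$ is recursive. Since $S$ is trivially recursively enumerable, it suffices to produce, for every omitted prime $p$, a finite certificate of non-membership.

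Following Cox and van der Poorten, a certificate for $p \notin S$ consists of an integer $n_0$ and a subgroup $H \leq (\Z/p\Z)^*$ with $-1 \notin H$, such that $P_1 \cdots P_{n_0} \pmod{p}$ lies in $H$ and $P_m \pmod{p}$ lies in $H$ for every $m > n_0$ with $P_m \neq p$. The running product $R_n := P_1 \cdots P_n$ then remains in a coset of $H$ missing $-1$, so $R_n \not\equiv -1 \pmod{p}$ for any $n$, and $p$ never occurs as a $P_{n+1}$. The first condition is finitely checkable; the difficulty is the infinite tail condition on $P_m \pmod{p}$.

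Refining Cox--van der Poorten, I would use the density hypothesis to bound the index of $H$. Since primes are equidistributed across the cosets of a subgroup of $(\Z/p\Z)^*$ (by Dirichlet / Chebotarev), if $S$ has log density $\delta > 0$ in the primes and its tail lies in $H$ modulo $p$, then $H$ has index at most $1/\delta$. Hence only finitely many subgroups $H$ are viable for each $p$, and a decision procedure can enumerate them. The algorithm runs in parallel: (i) enumerate $P_1, P_2, \ldots$ to detect $p \in S$, and (ii) for each candidate subgroup $H$ of index at most $1/\delta$, check whether the observed partial products $R_n \bmod p$ have stayed in a coset missing $-1$ up to some effective threshold, and if so, certify $p \notin S$.

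The main obstacle is converting ``observed $H$-containment up to time $N$'' into ``guaranteed $H$-containment for all time'': one needs a stability statement showing that, under the positive density hypothesis, the partial products cannot have a long transient in $H$ followed by an escape that eventually realises $-1 \pmod{p}$. This is the heart of the refinement over Cox--van der Poorten's qualitative coset argument, and I expect it to invoke a quantitative multiplicative equidistribution input, possibly of Burgess type in the same spirit as in the proof of Theorem 1, now harnessed to produce certificates rather than to construct omissions. The chief technical subtlety is that the sequence $\{P_n\}$ is not random but recursively defined, so the analysis of $R_n \bmod p$ must exploit the multiplicative structure of the partial products rather than merely a probabilistic heuristic.
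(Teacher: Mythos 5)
Your proposal has a genuine gap, and it sits exactly where you flag it: the step converting ``observed $H$-containment up to time $N$'' into ``guaranteed $H$-containment for all time.'' No quantitative equidistribution input of Burgess type (or any other known tool) controls the residues $P_m\bmod p$ of a recursively defined greedy sequence, so the finite certificate you need never materialises, and your decision procedure has no reason to terminate on omitted primes. There is also a mismatch with Cox and van der Poorten's actual method: their certificate is not that the running products stay in a coset of some $H$ for all time (an intrinsically infinite condition on the tail), but rather a finitely checkable inconsistency in a linear system over $\F_2$ for the exponents $(k_1,\ldots,k_r)$ in the factorization $1+P_1\cdots P_n=Q_1^{k_1}\cdots Q_r^{k_r}p^k$, where the equations come from congruences satisfied by the left-hand side modulo the already-computed $P_i$. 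Those constraints require no knowledge of the future of the sequence, which is precisely why the method is usable.

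The paper avoids your obstacle entirely by arguing in the direct rather than the contrapositive direction. Assuming non-recursiveness, some omitted prime $Q_r$ defeats a generalized version of the method (using base-$g_i$ discrete logarithms modulo $P_i^2$ and constraints modulo $P_i$ coming from those $P_j\equiv1\;(\text{mod }P_i)$) for every $N$. The permanent failure means the constraint vectors $v_{ij}$ span a proper subspace of $\F_{P_i}^r$ for each $i\le N$; choosing $w_i$ orthogonal to them and gluing by the Chinese Remainder Theorem produces an integer $q=Q_1^{a_1}\cdots Q_r^{a_r}$ that is an $m$-th power residue modulo every $P_j$, where $m=P_1\cdots P_N$, yet not a perfect $P_i$-th power. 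A Chebotarev density computation (Lemmas 3 and 4) then confines the entire sequence to a set of primes of density $\prod_{i\le N}(1-1/P_i)$, and letting $N\to\infty$ yields the dichotomy: either the product tends to $0$ (natural, hence logarithmic, density $0$) or it converges, in which case $\sum 1/P_i<\infty$ and the logarithmic density is again $0$. Your index bound $[(\Z/p\Z)^{\times}:H]\le 1/\delta$ is a reasonable observation but plays no role in this argument, and by itself it cannot substitute for the missing termination proof.
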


\section{Variants}
Before embarking on the proofs of Theorems 1 and 2, we set our results
in context by comparing to a few variants
of the sequence $\{P_n\}_{n=1}^{\infty}$.
\begin{enumerate}
\item As mentioned above, very little is known about Mullin's first
sequence $\{p_n\}_{n=1}^{\infty}$.  Shanks reasoned that as $n$ increases,
the numbers $t_n=p_1\cdots p_n$ should vary randomly among the invertible
residues classes (mod $p$) for any fixed prime $p$, until $p$ occurs in
the sequence, after which point $t_n\equiv0\;(\text{mod }p)$.  If $p$
does not occur then this is violated, since $t_n$ is always invertible
(mod $p$) but falls into the residue class of $-1$ at most finitely
many times.  As no one has found any reason to suggest that $t_n$ does
not vary randomly (mod $p$), this is certainly compelling.  However, there
is reason to tread cautiously, first because Kurokawa and Satoh \cite{ks}
have shown that an analogue of this conjecture for the Euclidean domains
$\F_p[x]$ is false in general, and second because of what happens in
the next variant that we consider.
\item In the second variant, instead of just introducing one new prime
at each step, we add in all prime divisors of $1$ plus the product of
the previously constructed primes.  In symbols, we set $S_0=\emptyset$
and define $S_n$ recursively by
$$
S_{n+1}=S_n\cup
\left\{p:p\text{ prime and }
p\Bigl|\Bigl(1+\prod_{s\in S_n}s\Bigr)\right\}.
$$
This is related to Sylvester's sequence
$\{s_n\}_{n=1}^{\infty}$, defined by
$s_n=1+\prod_{i=0}^{n-1}s_i$, or equivalently,
$s_0=2$, $s_{n+1}=1+s_n(s_n-1)$.
More precisely, there is empirical evidence to suggest that $s_n$
is always squarefree, and if that is the case then
$$
\prod_{p\in S_n}p=\prod_{i=0}^{n-1}s_i.
$$
In particular, each prime that we construct this way divides some
Sylvester number.  One could try applying the same sort reasoning as in
Shanks' conjecture for this sequence, but it turns out that there is a
conspiracy preventing this from working, since $s_n$ can be described by
a one-step recurrence.  In fact, Odoni \cite{odoni} showed that the set
of primes dividing a Sylvester number has density $0$.  Thus, perhaps
counterintuitively, the greedy algorithm of adding in all prime divisors
likely yields a very thin subset of the primes.
\item Pomerance considered the following variant (unpublished, but see
\cite[\S1.1.3]{cp}).  Let $r_1=2$, and define $r_{n+1}$ recursively to be the
smallest prime number which is not one of $r_1,\ldots,r_n$
and divides a number of the form $d+1$, where $d|r_1\cdots r_n$.
This is in some sense even greedier than the previous variant, but the
fact that we can choose proper divisors $d$ of $r_1\cdots r_n$ prevents the
numbers from growing out of control.  Thus, Pomerance showed that every
prime does indeed occur in this sequence, and in fact $r_n$ is just the
$n$th prime number for $n\ge 5$.
\item Each variant has an analogue with the $+1$ in the definition
replaced by $-1$.  For instance, Selfridge (unpublished, but see
\cite{gn}) considered the sequence $\{\widetilde{P}_n\}_{n=1}^{\infty}$
where $\widetilde{P}_1=3$ and $\widetilde{P}_{n+1}$ is the largest prime
factor of $\widetilde{P}_1\cdots \widetilde{P}_n-1$.  He showed that it
omits some primes, analogous to the result of Cox and van der Poorten
for $\{P_n\}_{n=1}^{\infty}$.  Likewise, with some small modifications
to the proof, it is not hard to see that Theorem 1 remains true with
$\{P_n\}_{n=1}^{\infty}$ replaced by $\{\widetilde{P}_n\}_{n=1}^{\infty}$.
\end{enumerate}

\section{Proofs}
We begin by reviewing the method of \cite{cv}.
For a positive integer $n$, suppose that $1+P_1\cdots P_n$ has the
factorization
\begin{equation}
\tag{$\ast$}
\label{PQ}
1+P_1\cdots P_n=q_1^{k_1}\cdots q_r^{k_r},
\end{equation}
where $q_1<\ldots<q_r$ are prime and $q_r=P_{n+1}$.
Observe that the left-hand side
is $\equiv3\;(\text{mod }4)$, so that
$$
\left(\frac{-4}{q_1}\right)^{k_1}\cdots\left(\frac{-4}{q_r}\right)^{k_r}=-1,
$$
where $\bigl(\frac{a}{b}\bigr)$ denotes the Kronecker symbol.
Similarly, if $d$ is a fundamental discriminant dividing
$P_1\cdots P_n$ then the left-hand side is $\equiv1\;(\text{mod }d)$, so
that
$$
\left(\frac{d}{q_1}\right)^{k_1}\cdots\left(\frac{d}{q_r}\right)^{k_r}=1.
$$
Cox and van der Poorten considered values of $d$ for which $|d|$ is one 
of the known $P_i$, thus obtaining a system of equations which they
attempted to solve by linear algebra over $\F_2$.
As more of the $P_i$ become known, one adds more and more constraints
that must be satisfied by the small primes $q$ which have not yet
occurred, and one can hope eventually to reach an inconsistent system.
There is no known reason to believe that the equations for the various
$P_i$ are related, and this motivates their conjectures.

An equivalent formulation of their method is to
look for a fundamental discriminant $d$ composed of known
$P_i$ such that $\bigl(\frac{d}{q}\bigr)=\bigl(\frac{-4}{q}\bigr)$
for the first several primes $q$ which are not known to occur.
This is the approach that we will take, as outlined in the following
lemmas.
\begin{lemma}
Let $\chi\;(\text{mod }q)$ be a non-principal quadratic character, not
necessarily primitive.  Then there is a prime number
$n\ll_{\varepsilon} q^{\frac1{4\sqrt{e}}+\varepsilon}$
such that $\chi(n)=-1$.
\end{lemma}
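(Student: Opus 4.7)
My plan is to adapt the classical Burgess--Vinogradov argument for the least prime quadratic non-residue to the non-primitive setting. I would let $\chi^*\pmod{q^*}$ denote the primitive quadratic character that induces $\chi$ (so $q^*\mid q$), and exploit that $\chi(n)=\chi^*(n)$ whenever $\gcd(n,q)=1$. It therefore suffices to produce a prime $p\le X:=q^{1/(4\sqrt e)+\varepsilon}$ with $p\nmid q$ and $\chi^*(p)=-1$. I would argue by contradiction: if no such prime exists, then every prime $p\le X$ coprime to $q$ has $\chi^*(p)=+1$ (the value $0$ is excluded because $p\nmid q$ implies $p\nmid q^*$), and hence every $X$-smooth integer coprime to $q$ takes $\chi^*$-value $+1$.

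Fixing $u\in(1,\sqrt e)$ and setting $Y=X^u$, a standard sieve combined with Dickman's function would give a smooth count $(\rho(u)+o(1))\tfrac{\phi(q)}{q}Y$ of $X$-smooth $n\le Y$ coprime to $q$. Bounding the remaining coprime-to-$q$ terms trivially by $-1$ would yield the lower bound
\begin{equation*}
\sum_{\substack{n\le Y\\(n,q)=1}}\chi^*(n)\;\ge\;\bigl(2\rho(u)-1-o(1)\bigr)\frac{\phi(q)}{q}Y\;\gg\;\frac{Y}{\log\log q},
\end{equation*}
using $\rho(u)>\tfrac12$ on $(1,\sqrt e)$ and Mertens' bound $\phi(q)/q\gg 1/\log\log q$. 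To bound the same sum from above I would M\"obius-invert the condition $(n,q)=1$: since $\chi^*$ already vanishes on integers sharing a factor with $q^*$, the condition reduces to $(n,m)=1$ for $m:=\prod_{p\mid q,\,p\nmid q^*}p$, and the sum rewrites as $\sum_{d\mid m}\mu(d)\chi^*(d)\sum_{n'\le Y/d}\chi^*(n')$. Burgess's estimate $|\sum_{n'\le Z}\chi^*(n')|\ll_\varepsilon Z^{1-1/r}(q^*)^{(r+1)/(4r^2)+\varepsilon}$ for the primitive character $\chi^*$, together with the divisor bound $\tau(m)\ll q^\varepsilon$, shows the total to be $o(Y/\log\log q)$ provided $Y>q^{1/4+\delta}$ for some $\delta>0$, contradicting the lower bound.

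The main obstacle is the simultaneous feasibility of the two constraints on $u$: the Dickman constraint $u<\sqrt e$ (needed to push the smooth-number density past $\tfrac12$) and the Burgess constraint $u\bigl(\tfrac{1}{4\sqrt e}+\varepsilon\bigr)>\tfrac14+\delta$ (needed for genuine cancellation in the character sum). These admit a nonempty range of admissible $u$ for every $\varepsilon>0$, and the balance between the Burgess exponent $\tfrac14$ and Dickman's threshold $\sqrt e$ is precisely what produces the exponent $\tfrac{1}{4\sqrt e}$ in the lemma. The secondary technical nuisance, which I would verify carefully rather than gloss over, is that the M\"obius sifting needed to handle the non-primitive case contributes only a harmless $q^\varepsilon$ factor, so that it does not offset the Burgess saving.
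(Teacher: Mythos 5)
Your proposal is correct in outline, but it takes a genuinely different route from the paper. You reconstruct the Vinogradov--Burgess argument from scratch: under the contradiction hypothesis every $X$-smooth $n$ coprime to $q$ has $\chi^*(n)=+1$, the threshold $\rho(u)>\tfrac12$ for $u<\sqrt e$ forces a lower bound $\gg Y/\log\log q$ on $\sum_{n\le Y,\,(n,q)=1}\chi^*(n)$, and M\"obius inversion over $m=\prod_{p\mid q,\,p\nmid q^*}p$ reduces the upper bound to Burgess for the primitive $\chi^*$ at the cost of a $\tau(m)\ll q^{\varepsilon}$ factor; balancing the constraints $u<\sqrt e$ and $Y>q^{1/4+\delta}$ gives the exponent $\frac1{4\sqrt e}$. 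The paper instead observes that the least $n$ with $\chi(n)=-1$ is automatically prime and then cites a known theorem that already covers imprimitive quadratic characters to \emph{cubefree} moduli; its only real content is the reduction to that case, achieved by factoring $\chi=\chi_0\chi_1$ and shrinking the modulus of the trivial part to $\prod_{p\mid q_0,\,p\nmid q_1}p$, so that the modulus is cubefree up to a factor of $8$. In short, the paper handles imprimitivity by cleaning the modulus before invoking a black box, while you handle it inside the character sum and reprove the black box; yours is self-contained but longer. Two points to nail down in a full write-up: (i) the count of $X$-smooth $n\le Y$ coprime to $q$ is asymptotic to $\rho(u)\prod_{p\mid q,\,p\le X}(1-1/p)\,Y$ rather than $\rho(u)\frac{\phi(q)}{q}Y$ --- harmless for your lower bound since that product is at least $\phi(q)/q$, but the uniformity in $\omega(q)$ deserves a citation; (ii) Burgess with arbitrary $r$ requires a cubefree conductor, and $q^*$, being the absolute value of a fundamental discriminant, may be divisible by $8$, so the same factor-of-$8$ caveat the paper addresses via \cite[(12.56)]{ik} applies to your use of Burgess as well.
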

\begin{proof}
Let $n$ be the smallest positive integer such that $\chi(n)=-1$.  It is
clear that $n$ must be prime, so it suffices to prove the upper bound.
This is essentially a special case of \cite[Theorem 1]{lw}, except for
the technical point that $q$ need not be cubefree.

To circumvent that, we
factor $\chi=\chi_0\chi_1$ where $\chi_0\;(\text{mod }q_0)$ is trivial
and $\chi_1\;(\text{mod }q_1)$ is a primitive quadratic character.
Note that if we replace $q_0$ by
$q_0'=\prod_{\substack{p|q_0\\p\nmid q_1}}p$
and $\chi_0$ by the trivial character $\chi_0'\;(\text{mod }q_0')$,
then $\chi'=\chi_0'\chi_1$ satisfies $\chi'(m)=\chi(m)$ for every $m$.
Thus, we may assume without loss of generality that $q_0$ is squarefree
and $(q_0,q_1)=1$.

Moreover, $\pm q_1$ is a fundamental discriminant, so in fact
$q=q_0q_1$ is cubefree except possibly for a factor of $8$.
Even if $8|q$, one can see that Burgess' bounds \cite[Theorem 2]{burgess2},
on which \cite[Theorem 1]{lw} is based, continue to hold at the
expense of a worse implied constant. (See \cite[(12.56)]{ik}
for a precise statement of this type.)  The result follows.
\end{proof}

\begin{lemma}
Let $q_1,\ldots,q_r$ be pairwise relatively prime positive integers.
For each $i=1,\ldots,r$, let $\chi_i\;(\text{mod }q_i)$ be a non-principal 
quadratic character, not necessarily primitive, and let
$\epsilon_i\in\{\pm1\}$.
Then there is a squarefree positive integer $n$ with at most $r$ prime
factors, each
$\ll_{\varepsilon} (q_1\cdots q_r)^{\frac1{4\sqrt{e}}+\varepsilon}$,
such that $\chi_i(n)=\epsilon_i$ for all $i=1,\ldots,r$.
\end{lemma}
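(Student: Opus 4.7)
The plan is to identify $\{\pm1\}^r$ with $\F_2^r$ (additively) and to build up the prime factors of $n$ greedily so that their character vectors
$$
v(p) = \bigl(\chi_1(p),\ldots,\chi_r(p)\bigr) \in \F_2^r
$$
span all of $\F_2^r$. Once that is achieved, the constraints $\chi_i(n)=\epsilon_i$ become a solvable $\F_2$-linear system in the $v(p)$'s, so there is a subset of the chosen primes whose product $n$ has the desired character values. Set $Q=q_1\cdots q_r$; we aim for every chosen prime to be $\ll_\varepsilon Q^{1/(4\sqrt{e})+\varepsilon}$ and for at most $r$ of them to be used.

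The main ingredient beyond Lemma 1 is the following observation: for any non-empty $S\subseteq\{1,\ldots,r\}$, the product character $\chi_S=\prod_{i\in S}\chi_i$ is non-principal modulo $\prod_{i\in S}q_i$. This I would verify by the Chinese Remainder Theorem: each $\chi_i$ takes the value $-1$ somewhere coprime to $q_i$, and since the $q_j$ are pairwise coprime, one can lift this to a residue coprime to $\prod_{i\in S}q_i$ on which $\chi_S=-1$.

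Next, I would carry out the greedy construction. Suppose primes $p_1,\ldots,p_k$ have been chosen, and let $V_k\subseteq\F_2^r$ be the span of their character vectors. If $V_k\neq\F_2^r$, pick any non-zero linear functional $\phi\colon\F_2^r\to\F_2$ vanishing on $V_k$; its support is a non-empty set $S\subseteq\{1,\ldots,r\}$, and $\phi(v(p))=0$ is precisely the condition $\chi_S(p)=1$. By the previous paragraph $\chi_S$ is non-principal modulo $\prod_{i\in S}q_i$, so Lemma 1 produces a prime
$$
p_{k+1}\ll_\varepsilon \Bigl(\prod_{i\in S}q_i\Bigr)^{\!\frac1{4\sqrt{e}}+\varepsilon} \le Q^{\frac1{4\sqrt{e}}+\varepsilon}
$$
with $\chi_S(p_{k+1})=-1$, i.e.\ $v(p_{k+1})\notin\ker\phi\supseteq V_k$. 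In particular $p_{k+1}$ is automatically distinct from $p_1,\ldots,p_k$ (otherwise $v(p_{k+1})\in V_k$), which takes care of squarefreeness for free. The dimension of $V_k$ thus strictly increases at each step, so after some $s\le r$ steps we have $V_s=\F_2^r$.

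Finally, with a spanning set in hand, I would solve the linear system: choose coefficients $a_j\in\F_2$ so that $\sum_j a_j v(p_j)$ equals the target vector corresponding to $(\epsilon_1,\ldots,\epsilon_r)$, and set $n=\prod_{j:\,a_j=1}p_j$. Then $n$ is squarefree, has at most $s\le r$ prime factors each $\ll_\varepsilon Q^{1/(4\sqrt{e})+\varepsilon}$, and satisfies $\chi_i(n)=\epsilon_i$ for all $i$. There is no real obstacle once one sees the linear-algebra picture; the only subtleties are the non-principality of $\chi_S$ (handled by CRT) and the automatic distinctness of the chosen primes (handled by the dimension argument).
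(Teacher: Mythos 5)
Your overall strategy --- producing, for suitable subsets $S$, primes on which the product character $\prod_{i\in S}\chi_i$ equals $-1$, and then doing linear algebra over $\F_2$ --- is the same as the paper's; your greedy variant even economizes, using at most $r$ invocations of Lemma 1 rather than $2^r-1$. But there is a genuine gap in the step where you apply Lemma 1 to $\chi_S=\prod_{i\in S}\chi_i$ regarded as a character modulo $\prod_{i\in S}q_i$ only. The prime $p_{k+1}$ so obtained is guaranteed coprime to $\prod_{i\in S}q_i$, but nothing prevents it from dividing some $q_j$ with $j\notin S$. In that case $\chi_j(p_{k+1})=0$, the vector $v(p_{k+1})$ does not lie in $\{\pm1\}^r$, and your identification with $\F_2^r$ breaks down; worse, if such a prime survives into the final product $n$, then $\chi_j(n)=0\ne\epsilon_j$ and the conclusion fails. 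This is not a far-fetched scenario: Lemma 1 tends to return a very small prime (it is the least integer at which the character is $-1$), which can easily divide one of the other, arbitrary, moduli.

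The fix is exactly what the paper does: define $\chi_S$ as a character modulo the full product $q=q_1\cdots q_r$ by multiplying in the principal characters $\psi_j\;(\text{mod }q_j)$ for $j\notin S$. This character is still quadratic and non-principal (your CRT argument carries over verbatim), and any prime $p$ with $\chi_S(p)=-1$ is then automatically coprime to every $q_i$, so all coordinates of $v(p)$ lie in $\{\pm1\}$. The price is that the bound from Lemma 1 becomes $\ll_{\varepsilon} q^{\frac1{4\sqrt{e}}+\varepsilon}$ rather than $\ll_{\varepsilon} \bigl(\prod_{i\in S}q_i\bigr)^{\frac1{4\sqrt{e}}+\varepsilon}$, but that is exactly the bound the lemma asserts, so nothing is lost. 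With this one correction, the rest of your argument --- the strict increase of $\dim V_k$, the automatic distinctness of the chosen primes, and the final solution of the $\F_2$-linear system --- is sound and matches the paper's proof in substance.
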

\begin{proof}
Let $\psi_i$ be the principal character (mod $q_i$) for $i=1,\ldots,r$,
and set $q=q_1\cdots q_r$.
For each non-empty subset $S\subset\{1,\ldots,r\}$ we define a character
$\chi_S\;(\text{mod }q)$ by
$$
\chi_S(n)=\prod_{i=1}^r
\begin{cases}
\chi_i(n)&\text{if }i\in S,\\
\psi_i(n)&\text{if }i\notin S.
\end{cases}
$$
Note that $\chi_S$ must be non-trivial since the $q_i$ are pairwise
relatively prime.  By Lemma 1, there is a prime
$n_S\ll_{\varepsilon} q^{\frac1{4\sqrt{e}}+\varepsilon}$
such that $\chi_S(n_S)=-1$.
Further, we associate to $S$ two vectors in $\F_2^r$.  The first is the
characteristic vector
$v_S=(a_1,\ldots,a_r)$, defined by
$$
a_i=
\begin{cases}
1&\text{if }i\in S,\\
0&\text{if }i\notin S.
\end{cases}
$$
The second is the unique vector
$w_S=(b_1,\ldots,b_r)$ such that
$\chi_i(n_S)=(-1)^{b_i}$ for $i=1,\ldots,r$.
These vectors have scalar product $v_S\cdot w_S=1$ since
$\chi_S(n_S)=-1$.

We claim that
$\bigl\{w_S:\emptyset\ne S\subset\{1,\ldots,r\}\bigr\}$ spans
$\F_2^r$.  If not then there would be a non-zero linear functional which
vanishes at each such $w_S$, i.e.\ a non-zero $v\in\F_2^r$ with
$v\cdot w_S=0$ for all $S\ne\emptyset$.
However, this is impossible since the $v_S$ exhaust
all non-zero vectors in $\F_2^r$.

Therefore, there is a set $T$ of non-empty subsets of $\{1,\ldots,r\}$
such that $\{w_S:S\in T\}$ is a basis for $\F_2^r$.  It follows that
the numbers $n_S$ for $S\in T$ are distinct primes, and as $n$ ranges
over the divisors of $\prod_{S\in T}n_S$,
$(\chi_1(n),\ldots,\chi_r(n))$ ranges over all elements of
$\{\pm1\}^r$.
\end{proof}

\subsubsection*{Proof of Theorem 1}
Let $Q_1,\ldots,Q_r$ be the first $r$ omitted primes.  (We allow $r=0$
to start the argument, with the understanding that $Q_1\cdots Q_r=1$ in
that case.)  Suppose that all other primes up to some number $x\ge 3$
eventually occur, and let $p=P_{n+1}\le x$ be the last to occur.
Then except for $Q_1,\ldots,Q_r$, all primes below $p$ must occur before $p$,
so \eqref{PQ} takes the form
$$
1+P_1\cdots P_n=Q_1^{k_1}\cdots Q_r^{k_r}\cdot p^k
$$
for some $k,k_1,\ldots,k_r\in\Z_{\ge0}$.
Now, applying Lemma 2 with the characters
$$
\left(\frac{-4}{\cdot}\right),
\left(\frac{\cdot}{p}\right)\text{ and }
\left(\frac{\cdot}{Q_1}\right),\ldots,\left(\frac{\cdot}{Q_r}\right),
$$
we can find a squarefree positive integer $d\equiv1\;(\text{mod }4)$
such that
$$
\left(\frac{d}{p}\right)=\left(\frac{-4}{p}\right),
\left(\frac{d}{Q_i}\right)=\left(\frac{-4}{Q_i}\right)
\text{ for }i=1,\ldots,r,
$$
and with all prime factors of $d$ bounded by
$O_{\varepsilon}\!\left((pQ_1\cdots Q_r)^{\frac1{4\sqrt{e}}+\varepsilon}\right)$.
Since $p\le x$ and $\frac1{4\sqrt{e}}<1$,
this bound must fall below $x$ for large
enough $x$, and in fact it is not hard to see that there is such an
$x\ll_{\varepsilon}(Q_1\cdots Q_r)^{\frac1{4\sqrt{e}-1}+\varepsilon}$.
This is a contradiction, and thus
there must be another omitted prime
$Q_{r+1}\ll_{\varepsilon}(Q_1\cdots Q_r)^{\frac1{4\sqrt{e}-1}+\varepsilon}$.
\qed

\medskip
The proof of Theorem 2 is based on the following generalization of the
method of Cox and van der Poorten.
For each $i=1,2,\ldots$, let $g_i$ be the smallest positive primitive root
(mod $P_i^2$), and let
$l_i:(\Z/P_i^2\Z)^{\times}\to\Z/P_i(P_i-1)\Z$ be the base-$g_i$
logarithm.  Suppose that we have computed $P_1,\ldots,P_N$.
Note that if $n\ge N$ then for
any $i\le N$, the left-hand side of \eqref{PQ}
is $\equiv1\;(\text{mod }P_i)$ but
$\not\equiv1\;(\text{mod }P_i^2)$ since the $P$'s are distinct.
Thus,
$k_1l_i(q_1)+\ldots+k_rl_i(q_r)\equiv0\;(\text{mod }P_i-1)$, but is non-zero
(mod $P_i$).  In other words, there is a vector
$b_i\in\F_{P_i}^r$ such that $b_i\cdot(k_1,\ldots,k_r)\ne0\in\F_{P_i}$.
On the other hand, we can construct other constraints (mod $P_i$) by
considering \eqref{PQ} modulo any $P_j$ for which
$P_j\equiv1\;(\text{mod }P_i)$ (if there are any).
If $P_j$ is such a prime then
$k_1l_j(q_1)+\ldots+k_rl_j(q_r)\equiv0\;(\text{mod }P_i)$,
i.e.\ there is a vector
$v_{ij}\in\F_{P_i}^r$ such that $v_{ij}\cdot(k_1,\ldots,k_r)=0\in\F_{P_i}$.

Thus, we can try to prove that $q_r$ is omitted by finding a linear
combination of the $v_{ij}$ which yields $b_i$.  For $i=1$, this
is equivalent to Cox and van der Poorten's method.  If that fails
to exclude $q_r$ then we can try $i=2$, and so on.  Note that from a
practical standpoint, one will accumulate equations modulo $P_1=2$ far
more quickly than for the other primes.  Thus, the greatest chance of
success is with $i=1$, so this is unlikely to yield any improvement over
their method in practice.  However, as our proof will show, the other
primes become useful if there is a conspiracy which makes their method fail.

\begin{lemma}
Let $n$ be a squarefree positive integer, $q$ an integer which is
relatively prime to $n$ and not
a perfect $p$th power for any prime $p|n$, and $d$ a divisor of
$n$.  Then the field $L=\Q(\sqrt[d]{q},e^{2\pi i/n})$ is normal over
$\Q$ and has degree $[L:\Q]=d\varphi(n)$.  Further, a rational prime $p$
not dividing the discriminant of $L$ splits completely in $L$ if and
only if $p\equiv1\;(\text{mod }n)$ and
$\exists x\in\Z$ such that $x^d\equiv q\;(\text{mod }p)$.
\end{lemma}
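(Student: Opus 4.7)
I would split the argument into the normality/degree assertion and the splitting criterion, tackling them in that order.

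\emph{Normality and degree.} Since $d\mid n$, the field $\Q(e^{2\pi i/n})\subset L$ already contains all the $d$-th roots of unity, so $L$ contains every conjugate $\zeta_d^k\sqrt[d]{q}$ of $\sqrt[d]{q}$ and hence is the splitting field of $(x^n-1)(x^d-q)$ over $\Q$; in particular it is Galois. Since $[\Q(e^{2\pi i/n}):\Q]=\varphi(n)$, the claim $[L:\Q]=d\varphi(n)$ reduces to $[L:\Q(\zeta_n)]=d$. By Kummer theory applied to $\Q(\zeta_n)(q^{1/d})/\Q(\zeta_n)$ (whose base contains $\mu_d$), this degree equals $d$ iff $q\notin\Q(\zeta_n)^{\times p}$ for every prime $p\mid d$.

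\emph{The non-power step is the main obstacle.} For odd $p\mid d$, I would observe that any $\beta\in\Q(\zeta_n)$ with $\beta^p=q$ generates an abelian subextension $\Q(\beta)/\Q$; since $[\Q(\beta):\Q]\in\{1,p\}$ and in the degree-$p$ case Galois closure forces $\zeta_p\in\Q(\beta)$, we would need $p-1\mid p$, absurd. So $\beta\in\Q$, contradicting that $q$ is not a $p$-th power in $\Z$. For $p=2$, let $m$ denote the squarefree part of $q$. Then $m\neq 1$ and $\gcd(m,n)=1$ (since the primes dividing $m$ divide $q$). If $\sqrt m\in\Q(\zeta_n)$, the conductor of $\Q(\sqrt m)/\Q$, which equals $|m|$ or $4|m|$, must divide $n$. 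Coprimality forces $|m|=1$, and the remaining case $m=-1$ requires $4\mid n$, contradicting the squarefreeness of $n$.

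\emph{Splitting.} For $p$ unramified in $L$, since $L/\Q$ is Galois, complete splitting is equivalent to complete splitting in $\Q(\zeta_n)$ together with complete splitting of every prime $\mathfrak p\mid p$ of $\Q(\zeta_n)$ in $L/\Q(\zeta_n)$. The first condition is the classical criterion $p\equiv 1\pmod n$, after which $\mathcal O_{\Q(\zeta_n)}/\mathfrak p\cong\F_p$. By Kummer theory applied to $L=\Q(\zeta_n)(q^{1/d})$, the prime $\mathfrak p$ then splits completely iff $q$ is a $d$-th power in the residue field, i.e.\ iff $x^d\equiv q\pmod p$ is solvable in $\Z$. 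Combining these gives the stated equivalence.
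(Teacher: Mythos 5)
Your proposal is correct and follows essentially the same route as the paper: reduce the degree claim to the irreducibility of $x^d-q$ over $\Q(\zeta_n)$, rule out $p$th roots of $q$ in $\Q(\zeta_n)$ by treating odd $p$ via normality of subfields of cyclotomic fields and $p=2$ via conductors/fundamental discriminants and $(q,n)=1$, then deduce the splitting criterion by passing through $\Q(\zeta_n)$ and reading off roots of $x^d-q$ in the residue field. The only cosmetic differences are that you invoke the standard Kummer-theoretic irreducibility criterion where the paper argues directly with constant terms of factors, and you rule out odd-$p$ roots by the divisibility $p-1\mid p$ rather than by non-reality of conjugates.
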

\begin{proof}[Proof (adapted from \cite{li}, Lemmas 3.1 and 3.2)]
First note that $L$ is the splitting field of $(x^d-q)(x^n-1)$, so it is
normal over $\Q$.  Set $\zeta_n=e^{2\pi i/n}$, and let $K=\Q(\zeta_n)$
be the corresponding cyclotomic field.  Then $K$ has degree $\varphi(n)$
over $\Q$, so to establish the formula for $[L:\Q]=[L:K][K:\Q]$,
it suffices to show that $x^d-q$ is irreducible over $K$.

To that end, we first show that $\sqrt[p]{q}\notin K$ for any prime
divisor $p|d$.  If $p$ is odd then $\Q(\sqrt[p]{q})\subset\R$ is not
normal over $\Q$ since it has non-real conjugates.  On the other hand,
every subfield of $K$ is normal over $\Q$ since $K$ is an abelian
extension, and thus $\sqrt[p]{q}\notin K$.  This argument fails if
$p=2$, but in that case it follows from class field theory that the
quadratic subfields of $K$ are exactly those of the form $\Q(\sqrt{D})$
for fundamental discriminants $D|n$.  Since $(q,n)=1$, $\Q(\sqrt{q})$
is not among them, so the claim still holds.

Next, suppose that $f\in K[x]$ is a monic irreducible factor of $x^d-q$, of
degree $d'<d$.  Note that over $L$ we have the factorization
$$
x^d-q=\prod_{j=1}^d\bigl(x-\zeta_d^j\sqrt[d]{q}\bigr),
$$
where $\zeta_d=\zeta_n^{n/d}$ is a primitive $d$th root of unity. Thus,
the constant term of $f$ must take the form $(-1)^{d'}\zeta_n^k q^{d'/d}$
for some integer $k$.  Hence $q^{d'/d}\in K$, and by the Euclidean algorithm
we can improve this to $q^{(d',d)/d}\in K$.  However, since $0\ne d'<d$,
there is a prime $p\bigl|\frac{d}{(d',d)}$.  This implies that
$\sqrt[p]{q}\in K$, in contradiction to the above, and thus $x^d-q$ is
irreducible over $K$, as claimed.

For the final statement, it is well-known that a rational prime $p$
splits completely in $K=\Q(\zeta_n)$ if and only if
$p\equiv1\;(\text{mod }n)$, and this is a necessary condition
for $p$ to split completely
in $L\supset K$.  If $p\equiv1\;(\text{mod }n)$, let $\mathfrak{p}$
be any of the $\varphi(n)$ primes of $K$ dividing $p\mathfrak{o}_K$,
where $\mathfrak{o}_K$ is the ring of integers of $K$.  If $p$ does not
divide the discriminant of $L$ then $\mathfrak{p}$ splits completely
in $L$ if and only if $x^d-q$ has $d$ roots in the residue field
$\mathfrak{o}_K/\mathfrak{p}\cong\F_p$, which in turn happens if and
only if $q$ has a $d$th root (mod $p$).
\end{proof}

\begin{lemma}
Let $m$ be a squarefree positive integer and $q$ an integer which is
relatively prime to $m$ and not
a perfect $p$th power for any prime $p|m$.  Then the set of primes
$p$ for which $x^m\equiv q\;(\text{mod }p)$ is solvable
has natural density $\frac{\varphi(m)}{m}$.
\end{lemma}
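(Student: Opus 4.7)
The plan is to apply Chebotarev's density theorem to the field $L = \Q(\sqrt[m]{q}, \zeta_m)$ constructed in Lemma 3, which has degree $m\varphi(m)$ over $\Q$. First I would describe $G := \mathrm{Gal}(L/\Q)$ as a semidirect product $\Z/m\Z \rtimes (\Z/m\Z)^\times$: an element $(b, a)$ acts by $\zeta_m \mapsto \zeta_m^a$ and $\sqrt[m]{q} \mapsto \zeta_m^b \sqrt[m]{q}$.

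For any prime $p$ unramified in $L$ (finitely many exceptions do not affect the density), the Frobenius $\sigma_p \in G$ has first coordinate $a \equiv p \pmod m$ and second coordinate $b$ characterized by $\zeta_m^b \equiv (\sqrt[m]{q})^{p-1} \pmod{\mathfrak{P}}$ for any prime $\mathfrak{P}$ of $L$ above $p$. A short calculation then shows that $\sigma_p$ permutes the $m$ roots $\zeta_m^j \sqrt[m]{q}$ of $x^m - q$ by $j \mapsto aj + b$, and these roots remain distinct modulo $\mathfrak{P}$ since $p \nmid m$. Hence $x^m \equiv q \pmod p$ has a solution in $\F_p$ precisely when the affine map $j \mapsto aj + b$ has a fixed point on $\Z/m\Z$, i.e.\ when $\gcd(a-1, m) \mid b$. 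The set $S = \{(b, a) \in G : \gcd(a-1, m) \mid b\}$ must therefore be a union of conjugacy classes, and by Chebotarev's theorem the density we seek equals $|S|/|G|$.

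It remains to compute $|S|$. For each $a \in (\Z/m\Z)^\times$, there are exactly $m/\gcd(a-1, m)$ admissible values of $b$, giving
\[
|S| = \sum_{a \in (\Z/m\Z)^\times}\frac{m}{\gcd(a-1, m)}.
\]
Since $m$ is squarefree, the Chinese remainder theorem makes this sum multiplicative across the prime divisors of $m$, and for each prime $p \mid m$ the local contribution is $1 + (p-2) p = (p-1)^2$ (the residue $a = 1$ contributes $1$ and each of the remaining $p-2$ invertible residues contributes $p$). Hence $|S| = \varphi(m)^2$ and the density equals $\varphi(m)^2/(m\varphi(m)) = \varphi(m)/m$, as claimed. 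The only real obstacle I foresee is the Frobenius bookkeeping needed to identify $S$ as a union of conjugacy classes; thereafter everything reduces to an elementary count, and the squarefreeness hypothesis on $m$ is used precisely to make the local contributions multiply cleanly.
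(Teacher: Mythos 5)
Your proof is correct, but it takes a genuinely different route from the paper's. You apply the Chebotarev density theorem once, in full strength, to the single field $L=\Q(\sqrt[m]{q},\zeta_m)$: you identify $\mathrm{Gal}(L/\Q)$ with the holomorph $\Z/m\Z\rtimes(\Z/m\Z)^{\times}$ (the degree count $[L:\Q]=m\varphi(m)$ from Lemma 3 is exactly what guarantees the Galois group is all of it), observe that solvability of $x^m\equiv q\;(\text{mod }p)$ is equivalent to the Frobenius having a fixed point among the roots, and then count the conjugation-invariant set of affine maps $j\mapsto aj+b$ with a fixed point; your local computation $1+(p-2)p=(p-1)^2$ and the resulting $|S|=\varphi(m)^2$ are right. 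The paper instead never computes a Galois group or a Frobenius action: it replaces $x^m\equiv q$ by $x^{(m,p-1)}\equiv q$, performs a M\"obius inversion over the divisors of $m$, and thereby reduces everything to counting primes that split completely in the fields $\Q(\sqrt[d]{q},\zeta_n)$ for $d\mid n\mid m$, where Chebotarev is needed only in its weakest form (density of completely split primes equals $1/[L:\Q]$; the paper notes that even the Kronecker--Frobenius theorem would do for logarithmic density). Your approach buys a cleaner, one-shot conceptual argument at the cost of the Frobenius bookkeeping you mention; the paper's buys a more elementary use of algebraic number theory at the cost of an inclusion--exclusion computation. One small point to tidy: for the roots of $x^m-q$ to stay distinct modulo $\mathfrak{P}$ you need $p\nmid mq$, not just $p\nmid m$, but this excludes only finitely many primes and does not affect the density.
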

\begin{proof}
Note that the number of solutions of
$x^m\equiv q\;(\text{mod }p)$
is the same as that of
$x^{(m,p-1)}\equiv q\;(\text{mod }p)$.
For large $y>0$, we thus want to estimate the fraction
$$
\begin{aligned}
\frac1{\pi(y)}&\sum_{p\le y}
\begin{cases}
1&\text{if }x^{(m,p-1)}\equiv q\;(\text{mod }p)\text{ is solvable},\\
0&\text{otherwise}
\end{cases}
\\
&=\sum_{d|m}\frac1{\pi(y)}\sum_{\substack{p\le y\\(m,p-1)=d}}
\begin{cases}
1&\text{if }x^d\equiv q\;(\text{mod }p)\text{ is solvable},\\
0&\text{otherwise}
\end{cases}\\
&=\sum_{d|m}\sum_{e|\frac{m}{d}}\mu(e)
\frac1{\pi(y)}\sum_{\substack{p\le y\\p\equiv1\;(\text{mod }de)}}
\begin{cases}
1&\text{if }x^d\equiv q\;(\text{mod }p)\text{ is solvable},\\
0&\text{otherwise}
\end{cases}\\
&=\sum_{n|m}\sum_{d|n}\mu\!\left(\frac{n}{d}\right)
\frac1{\pi(y)}\sum_{\substack{p\le y\\p\equiv1\;(\text{mod }n)}}
\begin{cases}
1&\text{if }x^d\equiv q\;(\text{mod }p)\text{ is solvable},\\
0&\text{otherwise}.
\end{cases}
\end{aligned}
$$
By Lemma 3 and the Chebotarev Density Theorem, the inner sum over $p$
divided by $\pi(y)$ tends to $\frac1{d\varphi(n)}$ as $y\to\infty$.
(Note that the earlier Kronecker-Frobenius Density Theorem would be
enough here if we instead considered the logarithmic density.)
Thus, the set we are interested in has density
$$
\sum_{n|m}\sum_{d|n}\frac{\mu(n/d)}{d\varphi(n)}
=\sum_{n|m}\frac{\mu(n)}{\varphi(n)}\sum_{d|n}\frac{\mu(d)}{d}
=\sum_{n|m}\frac{\mu(n)}{n}=\frac{\varphi(m)}{m}.
$$
\end{proof}

\subsubsection*{Proof of Theorem 2}
Since $\{P_j\}_{j=1}^{\infty}$ is recursively enumerable, the only way
that it can fail to be recursive is if there is some $Q_r$
for which there is no algorithm to prove that it does not occur among
the $P_j$.  In particular, the general strategy described above must fail
to exclude $Q_r$, no matter how large we take $N$.

Note that for large
enough $N$, \eqref{PQ} will take the form
$$
1+P_1\cdots P_n=Q_1^{k_1}\cdots Q_r^{k_r}
$$
for $n\ge N$.  For $i=1,\ldots,N$, let $b_i,v_{ij}\in\F_{P_i}^r$ be
as described above.  Although we have restricted to $i\le N$,
we are free to consider arbitrarily large values of $j$ in this construction
by taking $n\ge j$ in \eqref{PQ}, so for each $i$ there
are potentially infinitely many suitable $j$.  In order to avoid eventually
concluding that $Q_r$ is omitted, $b_i$ must not be a linear combination
of the $v_{ij}$; in particular, the $v_{ij}$ span a proper subspace of
$\F_{P_i}^r$, so there is a non-zero vector $w_i\in\F_{P_i}^r$ such that
$v_{ij}\cdot w_i=0$ for every $j$ such that $P_j\equiv1\;(\text{mod }P_i)$.
By the Chinese Remainder Theorem, there are non-negative integers
$a_1,\ldots,a_r<P_1\cdots P_N$ such that
$(a_1,\ldots,a_r)\equiv w_i\;(\text{mod }P_i)$ for $i=1,\ldots,N$.
Set $q=Q_1^{a_1}\cdots Q_r^{a_r}$.  Then by construction, $q$ is not a perfect
$P_i$th power for any $i\le N$, but it is a $P_i$th power residue (mod
$P_j$) for all $j$ such that $P_j\equiv1\;(\text{mod }P_i)$. Note also
that $q$ is automatically a $P_i$th power residue (mod $P_j$) if
$P_j\not\equiv1\;(\text{mod }P_i)$.

It follows that the entire sequence $\{P_j:j=1,2,\ldots\}$ is a subset of the
primes modulo which $q$ is an $m$th power residue,
where $m=P_1\cdots P_N$.  By Lemma 4, that set has density
$$\frac{\varphi(m)}{m}=\prod_{i=1}^N\left(1-\frac1{P_i}\right).$$
Taking $N$ arbitrarily large, we have
$$
\limsup_{x\to\infty}\frac{\#\{j:P_j\le x\}}{\pi(x)}
\le\prod_{i=1}^{\infty}\left(1-\frac1{P_i}\right),
$$
with the understanding that the right-hand side is $0$ if the product
diverges.  In that case, $\{P_j\}_{j=1}^{\infty}$ has natural density
$0$, which in turn implies that the logarithmic density is $0$.  On the
other hand, if the product converges then so does
$\sum_{i=1}^{\infty}\frac1{P_i}$, which also implies that the
logarithmic density is $0$.

Finally, we remark that while it does not
necessarily follow that $\{P_j\}_{j=1}^{\infty}$ has a natural density,
the last inequality shows that
its upper density is strictly less than $1$; in fact, using just the
values in Table 1, we see that the upper density is at most $0.277056$.
\qed
\bibliographystyle{amsplain}

\begin{thebibliography}{10}

\bibitem{burgess1}
D.~A. Burgess, \emph{The distribution of quadratic residues and non-residues},
  Mathematika \textbf{4} (1957), 106--112. \MR{0093504 (20 \#28)}

\bibitem{burgess2}
\bysame, \emph{On character sums and {$L$}-series. {II}}, Proc. London Math.
  Soc. (3) \textbf{13} (1963), 524--536. \MR{0148626 (26 \#6133)}

\bibitem{cv}
C.~D. Cox and A.~J. Van~der Poorten, \emph{On a sequence of prime numbers}, J.
  Austral. Math. Soc. \textbf{8} (1968), 571--574. \MR{0228417 (37 \#3998)}

\bibitem{cp}
Richard Crandall and Carl Pomerance, \emph{Prime numbers: A computational
  perspective}, second ed., Springer, New York, 2005. \MR{2156291
  (2006a:11005)}

\bibitem{gn}
Richard Guy and Richard Nowakowski, \emph{Discovering primes with {E}uclid},
  Delta (Waukesha) \textbf{5} (1975), no.~2, 49--63. \MR{0384675 (52 \#5548)}

\bibitem{ik}
Henryk Iwaniec and Emmanuel Kowalski, \emph{Analytic number theory}, American
  Mathematical Society Colloquium Publications, vol.~53, American Mathematical
  Society, Providence, RI, 2004. \MR{2061214 (2005h:11005)}

\bibitem{ks}
Nobushige Kurokawa and Takakazu Satoh, \emph{Euclid prime sequences over unique
  factorization domains}, Experiment. Math. \textbf{17} (2008), no.~2,
  145--152. \MR{2433881 (2009k:11200)}

\bibitem{lw}
Yuk-Kam Lau and Jie Wu, \emph{On the least quadratic non-residue}, Int. J.
  Number Theory \textbf{4} (2008), no.~3, 423--435. \MR{2424331 (2009e:11191)}

\bibitem{li}
Shuguang Li, \emph{On extending {A}rtin's conjecture to composite moduli},
  Mathematika \textbf{46} (1999), no.~2, 373--390. \MR{1832628 (2002d:11118)}

\bibitem{mullin}
Albert~A. Mullin, \emph{Recursive function theory}, Bull. Amer. Math. Soc.
  \textbf{69} (1963), 737.

\bibitem{naur}
Thorkil Naur, \emph{Mullin's sequence of primes is not monotonic}, Proc. Amer.
  Math. Soc. \textbf{90} (1984), no.~1, 43--44. \MR{722412 (85i:11011)}

\bibitem{odoni}
R.~W.~K. Odoni, \emph{On the prime divisors of the sequence
  {$w_{n+1}=1+w_1\cdots w_n$}}, J. London Math. Soc. (2) \textbf{32} (1985),
  no.~1, 1--11. \MR{813379 (87b:11094)}

\bibitem{OEIS}
\emph{The {O}n-{L}ine {E}ncyclopedia of {I}nteger {S}equences}, Published
  electronically at \url{http://oeis.org}, 2011.

\bibitem{shanks}
Daniel Shanks, \emph{Euclid's primes}, Bull. Inst. Combin. Appl. \textbf{1}
  (1991), 33--36. \MR{1103634 (92f:11013)}

\bibitem{wagstaff}
Samuel~S. Wagstaff, Jr., \emph{Computing {E}uclid's primes}, Bull. Inst.
  Combin. Appl. \textbf{8} (1993), 23--32. \MR{1217356 (94e:11139)}

\end{thebibliography}
\providecommand{\bysame}{\leavevmode\hbox to3em{\hrulefill}\thinspace}
\providecommand{\MR}{\relax\ifhmode\unskip\space\fi MR }
\providecommand{\MRhref}[2]{%
  \href{http://www.ams.org/mathscinet-getitem?mr=#1}{#2}
}
\providecommand{\href}[2]{#2}

\end{document}